\def\R{\mathbb{R}}
\def\cA{\mathcal{A}}
\def\cI{\mathcal{I}}
\def\cJ{\mathcal{J}}
\def\cM{\mathcal{M}}
\def\cN{\mathcal{N}}
\def\cS{\mathcal{S}}
\def\cT{\mathcal{T}}
\def\a{\alpha}
\def\b{\beta}
\def\g{\gamma}
\def\d{\delta}
\def\k{\kappa}
\def\l{\lambda}
\def\p{\partial}
\def\o{\omega}
\def\veps{\varepsilon}
\def\vrho{\varrho}
\def\vphi{\varphi}
\def\O{\Omega}
\def\G{\Gamma}
\def\transp{{\sf T}}
\def\tu{\widetilde{u}}
\def\tx{\widetilde{x}}
\newcommand{\dv}[1]{\,{\mathrm d}#1}
\newcommand{\dual}[3][]{#1\langle #2,#3#1\rangle}
\newcommand{\wcheck}[1]{#1\hspace{-.8ex}\mbox{\huge {\lower.45ex \hbox{$\textstyle \check{}$}}} \hspace{.5ex}}
\DeclareMathOperator{\supp}{supp}
\let\oldmarginpar\marginpar
\renewcommand\marginpar[1]{
  \oldmarginpar[\raggedleft\footnotesize #1]
  {\raggedright\footnotesize #1}}
\newtheorem{definition}{Definition}
\newtheorem{lemma}[definition]{Lemma}
\newtheorem{theorem}[definition]{Theorem}
\newtheorem{example}[definition]{Example}
\numberwithin{definition}{section}
\definecolor{modmag}{RGB}{179,0,229}
\renewcommand{\text}{\textnormal}
\def\inv{{\rm inv}}
\def\DD{{\rm D}}
\def\NN{{\rm N}}
\def\tl{\widetilde{\l}}
\def\tPi{\widetilde{\Pi}}
\def\tF{\widetilde{F}}
\def\tv{\widetilde{v}}
\def\eoc{{\rm eoc}}
\begin{document}
\title[Optimal approximation of harmonic maps]{Quasi-optimal error estimates
for the approximation of stable harmonic maps}
\author[S. Bartels]{S\"oren Bartels}
\address{Abteilung f\"ur Angewandte Mathematik,
Albert-Ludwigs-Universit\"at Freiburg, Hermann-Herder-Str.~10,
79104 Freiburg i.~Br., Germany}
\email{bartels@mathematik.uni-freiburg.de}
\author[C. Palus]{Christian Palus}
\address{Abteilung f\"ur Angewandte Mathematik,
Albert-Ludwigs-Universit\"at Freiburg, Hermann-Herder-Str.~10,
79104 Freiburg i.~Br., Germany}
\email{christian.palus@mathematik.uni-freiburg.de}
\author[Z. Wang]{Zhangxian Wang}
\address{Abteilung f\"ur Angewandte Mathematik,
Albert-Ludwigs-Universit\"at Freiburg, Hermann-Herder-Str.~10,
79104 Freiburg i.~Br., Germany}
\email{zhangxian.wang@mathematik.uni-freiburg.de}
\date{\today}
\renewcommand{\subjclassname}{
\textup{2010} Mathematics Subject Classification}
\subjclass[2010]{35J62 (35J50 35J57 65N30)}
\begin{abstract}
Based on a quantitative version of the inverse function theorem
and an appropriate saddle-point formulation we derive a
quasi-optimal error estimate for the finite element approximation
of harmonic maps into spheres with a nodal discretization of
the unit-length constraint. The estimate holds under
natural regularity requirements and appropriate geometric
stability conditions on solutions. Extensions to other target manifolds
including boundaries of ellipsoids are discussed.
\end{abstract}
\keywords{Harmonic maps, finite elements, inverse function theorem,
saddle-point formulation, error estimate}

\maketitle

\section{Introduction}
Harmonic maps into spheres are stationary configurations for the
Dirichlet energy  
\[
I(u) = \frac12 \int_\O |\nabla u|^2 \dv{x}
\]
among vector fields $u:\O\to \R^m$, $\O\subset \R^d$,
satisfying prescribed boundary
conditions $u|_{\G_\DD} = u_\DD$ and the pointwise sphere constraint
\[
u(x) \in S^{m-1} \quad \Longleftrightarrow \quad |u(x)|^2 -1 = 0
\]
for almost every $x\in \O$. The existence of global minimizers
is an immediate consequence of the direct method in the calculus
of variations provided that the admissible set is non-empty. More
generally, stationary points satisfy the Euler--Lagrange equations
\begin{equation}\label{eq:euler_lagrange}
-\Delta u = |\nabla u|^2 u, \quad u|_{\G_\DD} = u_\DD, \quad
\p_n u|_{\G_\NN} = 0, \quad |u|^2 = 1,
\end{equation}
where $\G_\NN = \p\O \setminus \G_\DD$. Since the right-hand side
in the partial differential equation may only belongs
to $L^1(\O;\R^m)$ regularity of solutions cannot be expected
in general and in fact solutions that are everywhere discontinuous
exist, cf.~\cite{Stru96-book,Rivi95}.

Motivated by related models and applications in micromagnetics,
liquid crystal devices, and nonlinear bending, cf.,
e.g.,~\cite{KPPRS19,BoNoWa20,BaBoNo17} and references therein, the numerical
approximation of pointwise constrained variational problems has received
considerable attention
in the last decades. Various discretizations and iterative schemes have been devised
and analyzed in~\cite{LinLus89,Alou97,Bart05,Bart10,GrHaSa15}. To avoid unjustified
regularity assumptions, the convergence of numerical methods
has often been based on weak compactness results for the
Euler--Lagrange equations which shows that weak accumulation
points of approximations are harmonic maps. To fully justify
the methods it is important to prove their optimal convergence
in the case of sufficiently regular solutions, and only a few results
in this direction are available, cf.~\cite{ClaDzi03,QiTaWi09,GrHaSa15}.

An attractive and flexible approach to deriving error estimates for numerical
schemes has been identified in~\cite{QiTaWi09} and it is our aim to
address its validity for three-dimensional domains~$\O$ and higher-dimensional
target manifolds. Their approach is based on the
Lagrange functional
\[
L(u,\l) = \frac12 \int_\O |\nabla u|^2 \dv{x}
+ \frac12 \int_\O \l (|u|^2 -1) \dv{x}
\]
that imposes the constraint via a Lagrange multiplier~$\l$.
A suitable functional analytical framework interprets the
constraint term in a weaker sense and seeks stationary
pairs $(u,\l)$ in the affine space
\[
\cA = (u_\DD,0) + X,
\]
with the product space
\[
X = H^1_\DD(\O;\R^m) \cap L^\infty(\O;\R^m) \times H^{-1}(\O),
\]
where $H^{-1}(\O)$ is the topological dual of the
Sobolev space $H^1_\DD(\O)$. To derive error estimates
in a neighborhood of a solution $(u,\l)$  the mapping properties
of the second variation of $L$ are releavant. Its stable
invertibility can be analyzed in terms of a saddle-point
problem which seeks for a given functional $(f,g)\in X'$ a solution
$(v,\mu) \in X$ such that
\[\begin{split}
(\nabla v,\nabla w) + \dual{\l}{v\cdot w} + \dual{\mu}{u \cdot w} &= \dual{f}{w}, \\
\dual{\eta}{u\cdot v} \, \hphantom{+ \dual{\l}{v\cdot w} + \dual{\mu}{u \cdot w}}  & = \dual{g}{\eta},
\end{split}\]
for all $(w,\eta) \in X$. Well established theories for
saddle-point problems assert that the problem has a unique
and stable solution if and only if the bilinar form
\[
b_u(\mu,v) = \dual{\mu}{u \cdot v}
\]
is bounded and satisfies an inf-sup condition, and the bilinear form
\[
a_\l(v,w) = (\nabla v,\nabla w) + (\l, v\cdot w),
\]
with $\l= -|\nabla u|^2$, is bounded and defines an invertible operator on the
kernel of $b_u$ with respect to the second argument.
The inf-sup condition is obtained by choosing
for given $\mu\in H^{-1}(\O)$ the function $v= \phi u$, where
$\phi \in H^1_\DD(\O)$ satisfies $\dual{\mu}{\phi} = \|\mu\|_{H^{-1}}$.
The kernel of $b_u$ consists of
tangential vector fields $v\in T_u$ with
\[
T_u = \big\{ v\in H^1_\DD(\O;\R^m) \cap L^\infty(\O;\R^m):
v\cdot u = 0 \text{ a.e.}\big\}.
\]
We say that $u$ is a stable harmonic map, if $a_\l$ is $H^1$ coercive
on $T_u$. Besides the special case
$|\nabla u| < c_P^{-1}$ with the Poincar\'e constant $c_P>0$
a coercivity result holds if the one-dimensional sphere
is considered as a target manifold, i.e., $m=2$ and $u:\O \to S^1$.
In this case tangential vector fields are given by
\[
v = \a u^\perp,
\]
with $\a\in H^1_\DD(\O)$ and the rotation $u^\perp$ of $u$ by $\pi/2$.
We then have the coercivity property
\[
a_\l (v,v) = \int_\O |\nabla \a|^2 \dv{x} \ge (1+\|\nabla u\|_{L^\infty}^2 c_P^2)^{-1} \|\nabla v\|^2,
\]
whenever the harmonic map~$u$ satisfies $u\in W^{1,\infty}(\O;\R^2)$.
Remarkably, this stability property fails if the (same) harmonic
map $u$ is allowed to attain values in the two-dimensional sphere.
Indeed, by embedding the image of $u$ into $S^2$ via
$\tu = [u,0]^\transp$, and considering $v = \a e_3 \in T_{\tu}$ we find that
\[
a_\l(v,v) = \int_\O |\nabla \a|^2 - |\nabla u|^2 \a^2 \dv{x}.
\]
The right-hand side can only be positive for all $\a\in H^1_\DD(\O)$
if $|\nabla u|$ is sufficiently small.

Only a few results are available concerning the uniqueness and stability of harmonic
maps into higher-dimensional spheres, cf.,~e.g.,~\cite{JagKau79,JagKau83}.
In particular, if a cut-locus condition is satisfied, e.g., if the image of a harmonic
map is strictly contained in a hemisphere, then~\cite[Theorem B]{JagKau79}
states that the only Jacobi field along a harmonic map $u$, i.e., a field
$v\in T_u$ with $a_\l(v,v)=0$, is the trivial one. If $u\in \cA$ is an
absolute minimizer for $I$ then we have that $a_\l$ is semi-definite
and if, e.g., $u\in W^{1,\infty}(\O;\R^m)$ a contradiction argument implies that
$a_\l$ is coercive on $T_u$. In view of limited
regularity properties, cf.~\cite{SchUhl82,Lin87,Rivi95} and nonuniqueness
properties, cf., e.g.,~\cite{Bart15-book}, a more general theory cannot be
expected.

Provided that the harmonic map $u$ is regular, i.e., we
have that $u\in H^2(\O;\R^m)\cap W^{1,\infty}(\O;\R^m)$, and stable,
i.e., the bilinear form $a_\l$ is $H^1$ coercive on $T_u$, we derive
the quasi-optimal error estimate
\[
\|\nabla (u-u_h) \| + \|\l - \l_h \|_{H^{-1}} \le c_u h,
\]
for a canonical discretization of the Lagrange
functional and the unique finite element
solution~$(u_h,\l_h) \in \cS^1(\cT_h)^m \times \cS^1_\DD(\cT_h)$
in an appropriate neighborhood of~$u$. Our analysis thus shows
that the arguments of~\cite{QiTaWi09} also apply to
higher-dimensional domains and targets under appropriate and
meaningful conditions. Some restrictions arise from the simpler
functional analytical framework in the discrete setting and the
resulting use of inverse estimates to control $L^\infty$ norms.

The outline of the article is as follows. Some preliminaries are
stated in Section~\ref{sec:prelim}. The main error estimate
is derived in Section~\ref{sec:main_est} by verifying the conditions
of the inverse function theorem. The application of the analysis
to other target manifolds is addressed in Section~\ref{sec:targets}.
Numerical experiments that confirm the theoretical results are
reported in Section~\ref{sec:num_ex}.

\section{Preliminaries}\label{sec:prelim}
We use standard notation to denote Lebesgue and Sobolev spaces. The
integration domain is often omitted in norms and we abbreviate
the inner product and norm in $L^2(\O;\R^\ell)$ by $(\cdot,\cdot)$
and $\|\cdot\|$, respectively. Throughout the article $c>0$ denotes
a factor that may depend on regularity properties of a fixed solution~$u$
but not on the mesh-sizes of a sequence of triangulations; the dependence
on~$u$ is occasionally indicated via a subindex. We let
$c_P>0$ denote the smallest positive number
with $\|v\| \le c_P \|\nabla v\|$ for all $v\in H^1_\DD(\O)$; we remark
that $c_P\le d_\O/\pi$ if $\G_\DD = \p\O$ and $\O$ is a convex
domain with diameter $d_\O$, cf.~\cite{PayWei60}.

\subsection{Finite element functions}
For a regular and quasi-uniform triangulation $\cT_h$ of the simplicial domain
$\O \subset \R^d$  with mesh-size $h>0$ we denote the $C^0$ conforming
finite element space by $\cS^1(\cT_h)$ of elementwise linear functions.
We denote the subspace of functions vanishing on $\G_\DD$ by
\[
\cS^1_\DD(\cT_h) = \cS^1(\cT_h) \cap H^1_\DD(\O).
\]
We let $\cN_h$ be the set of vertices of elements and denote the nodal interpolation
operator applied to scalar or vector-valued functions by
\[
\cI_h : C(\overline{\O};\R^\ell) \to \cS^1(\cT_h)^\ell, \quad \cI_h v = \sum_{z\in \cN_h} v(z) \vphi_z,
\]
where $(\vphi_z:z\in \cN_h)$ is the scalar nodal basis for $\cS^1(\cT_h)$.
We note that we have the nodal interpolation estimate for $v\in H^1_\DD(\O;\R^\ell)$
with $v|_T\in H^2(T)$ for all $T\in \cT_h$ that
\[
\|v-\cI_h v\| + h\|\nabla (v-\cI_h v)\|  \le c h^2 \|D_h^2 v\|,
\]
where $D_h^2$ denotes the elementwise application of the Hessian.
For an elementwise poynomial function $\phi_h \in H^1(\O)$ we have
\[
\|\phi_h - \cI_h \phi_h \|_{L^1} \le c h^2 \|D_h^2 \phi_h\|_{L^1}.
\]
We make repeated use of inverse estimates, which read for $v_h \in \cS^1_\DD(\cT_h)$
\begin{equation}\label{eq:inv_est_der}
\|\nabla v_h\|_{L^p} \le c h^{-1} \|v_h\|_{L^p}
\end{equation}
and, using Sobelev inequalities, with
$\g_\inv(h) = 1, 1+ |\log h|, h^{-1/2}$ for $d=1,2,3$, respectively,
we moreover have that
\begin{equation}\label{eq:inv_est_inf}
\|v_h\|_{L^\infty} \le  c \g_\inv(h) \|\nabla v_h\|.
\end{equation}
The estimate can be deduced from elementary local norm equivalences and Sobolev
inequalities, i.e.,
\[
\|v_h\|_{L^\infty} \le c h^{-d/p} \|v_h\|_{L^p} \le c h^{-d/p}  \|\nabla v_h\|
\]
with $p\le \infty$, $p<\infty$, and $p\le 2d$, for $d=1,2,3$,
respectivly. A precise characterization of
the Sobolev embedding is needed if $d=2$, cf.~\cite{Bart15-book}, a weaker
result for $d=2$ is obtained with $p=d/\veps$ for fixed $\veps>0$.
A discrete inner product is for $v,w\in C(\overline{\O})$ defined via
\[
(v,w)_h = \int_\O \cI_h (v\cdot w)\dv{x} = \sum_{z\in \cN_h} \b_z v(z) \cdot w(z),
\]
where $\b_z = \int_\O \vphi_z \dv{x}$ is positive. For $v_h \in \cS^1(\cT_h)$
we have $\|v_h\|_h \le \|v_h\| \le c \|v_h\|_h$. We frequently use the following
estimate.

\begin{lemma}[Quadrature control]\label{la:quad_control}
For $\psi_h\in \cS^1_\DD(\cT_h)$ and $\phi \in C(\overline{\O})$ with
$\phi|_T \in H^2(T)$ for all $T\in \cT_h$ we have
\[
\big|(\psi_h, \phi)_h - (\psi_h,\phi)\big|
\le c h^2 \big(\|\nabla \psi_h\| \|\nabla \cI_h \phi\| + \|\psi_h\| \|D_h^2 \phi\|\big).
\]
In case of an elementwise polynomial function $\phi_h\in C(\overline{\O})$ we
have
\[
\big|(\psi_h, \phi_h)_h - (\psi_h,\phi_h)\big| \le c h \|\psi_h\| \|\nabla \phi_h\|.
\]
\end{lemma}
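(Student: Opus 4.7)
The plan is to reduce both inequalities to the standard piecewise-affine quadrature estimate
\[
|(u_h,v_h)_h - (u_h,v_h)| \le c h^2 \|\nabla u_h\|\,\|\nabla v_h\|
\quad \text{for all } u_h,v_h\in \cS^1(\cT_h),
\]
which itself follows from the fact that on each $T\in \cT_h$ the product $u_h v_h$ is a quadratic polynomial with elementwise constant Hessian $D^2(u_h v_h)|_T = \nabla u_h\otimes\nabla v_h+\nabla v_h\otimes \nabla u_h$, combined with the elementwise $L^1$ interpolation bound $\|p-\cI_h p\|_{L^1(T)}\le c h^2 \|D^2 p\|_{L^1(T)}$ and Cauchy--Schwarz. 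The bridge to the non-polynomial integrand is the nodal identity
\[
\cI_h(\psi_h \phi) = \cI_h(\psi_h\,\cI_h\phi),
\]
which is immediate from $(\psi_h\phi)(z) = \psi_h(z)\phi(z) = (\psi_h\,\cI_h\phi)(z)$ at every $z\in \cN_h$. Consequently $(\psi_h,\phi)_h = (\psi_h,\cI_h\phi)_h$.

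For the first assertion I would split
\[
(\psi_h,\phi)_h - (\psi_h,\phi) = \big[(\psi_h,\cI_h\phi)_h - (\psi_h,\cI_h\phi)\big] + (\psi_h,\cI_h\phi - \phi).
\]
The bracket lies in the piecewise-affine setting above and is bounded by $c h^2 \|\nabla \psi_h\|\,\|\nabla \cI_h\phi\|$. The residual term is controlled with Cauchy--Schwarz and the stated nodal interpolation estimate, giving $\|\psi_h\|\,\|\cI_h\phi-\phi\|\le c h^2 \|\psi_h\|\,\|D_h^2\phi\|$. Summing both contributions yields the claim.

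For the second assertion the same decomposition is used with $\phi_h$ in place of $\phi$. The leading bracket is bounded by $c h^2 \|\nabla\psi_h\|\,\|\nabla\cI_h\phi_h\|$; the inverse estimate \eqref{eq:inv_est_der} converts $h\|\nabla\psi_h\|$ into $c\|\psi_h\|$, and a local argument (on each $T$ the difference $\cI_h\phi_h-\phi_h$ is a polynomial of bounded degree vanishing at the vertices, so that the interpolation estimate combined with the inverse bound $|\phi_h|_{H^2(T)}\le c h_T^{-1}\|\nabla\phi_h\|_{L^2(T)}$ applies) delivers both the stability $\|\nabla \cI_h\phi_h\|\le c\|\nabla\phi_h\|$ and the residual bound $\|\cI_h\phi_h-\phi_h\|\le c h\|\nabla\phi_h\|$. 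Combining these turns the leading bracket into $c h \|\psi_h\|\,\|\nabla\phi_h\|$ and bounds the remainder $(\psi_h,\cI_h\phi_h-\phi_h)$ by the same quantity.

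No step is genuinely hard; the main thing to get right is the identity $\cI_h(\psi_h\phi)=\cI_h(\psi_h\,\cI_h\phi)$, which removes the non-polynomial factor from the quadrature and confines the Hessian of $\phi$ to the easily controlled interpolation-error piece. In the polynomial variant, the mild subtlety is that quasi-uniformity is invoked in the global inverse estimate, and the concomitant loss of one factor of $h$ is precisely compensated by the polynomial stability of the nodal interpolant.
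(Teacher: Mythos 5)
Your proposal is correct and takes essentially the same approach as the paper: the paper writes $(\psi_h,\phi)_h - (\psi_h,\phi) = \int_\O \bigl[\cI_h(\psi_h\phi) - \psi_h\cI_h\phi\bigr]\dv{x} + \int_\O \psi_h(\cI_h\phi - \phi)\dv{x}$, which coincides term by term with your decomposition once one observes $\cI_h(\psi_h\phi) = \cI_h(\psi_h\,\cI_h\phi)$ (the paper leaves this implicit, you state it explicitly). The second estimate is likewise derived from the first by inverse estimates and $H^1$ stability of $\cI_h$ on elementwise polynomials in both arguments.
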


\begin{proof}
We have that
\[ 
(\psi_h,\phi)_h - (\psi_h,\phi)
= \int_\O \cI_h (\psi_h \phi) - \psi_h \cI_h \phi \dv{x}
+ \int_\O \psi_h (\cI_h \phi - \phi) \dv{x},
\]
and the two terms on the right-hand side are controlled with the $L^1$ and
$L^2$ nodal interpolation estimates stated above. The second estimate follows
from the first one by using the inverse estimate~\eqref{eq:inv_est_der}
(generalized to elementwise polynomial functions) twice
and the $H^1$ stability of $\cI_h$ on elementwise polynomial functions.
\end{proof}

We let $\Pi_h : L^2(\O)\to \cS^1_\DD(\cT_h)$ denote the $L^2$ projection onto
$\cS^1_\DD(\cT_h)$ and by $\tPi_h: L^2(\O) \to \cS^1_\DD(\cT_h)$ the modified version given by
\[
(\tPi_h v, \phi_h)_h = (v,\phi_h)
\]
for all $\phi_h\in \cS^1_\DD(\cT_h)$. We note that $\Pi_h$ is $H^1$ stable on quasi-uniform
triangulations. The modified projection has similar properties as $\Pi_h$.

\begin{lemma}[Modified $L^2$ projection]\label{la:l2proj}
The projection $\tPi_h$ satisfies for all $v \in H^1_\DD(\O)$
\[
\| \nabla \tPi v\| + h^{-1} \| \tPi v -v\| \leq c \|\nabla v\|.
\]
\end{lemma}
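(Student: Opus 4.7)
The plan is to compare $\tPi_h v$ with the standard $L^2$-projection $\Pi_h v$ onto $\cS^1_\DD(\cT_h)$ and show that their difference is of order $h\|\nabla v\|$ in $L^2$; the two claimed bounds then follow from the $H^1$-stability and standard $L^2$-error estimate of $\Pi_h$ on quasi-uniform meshes, combined with the inverse estimate~\eqref{eq:inv_est_der}. Well-posedness of $\tPi_h$ is immediate since the mass-lumped form $(\cdot,\cdot)_h$ is positive definite on $\cS^1_\DD(\cT_h)$: the weights $\b_z$ are strictly positive and nodal values uniquely determine an element of $\cS^1_\DD(\cT_h)$.

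For the key comparison, set $e_h = \tPi_h v - \Pi_h v \in \cS^1_\DD(\cT_h)$. Using first the defining relation of $\tPi_h$ and then that of $\Pi_h$, one has for every $\phi_h \in \cS^1_\DD(\cT_h)$
\[
(e_h,\phi_h)_h = (v,\phi_h) - (\Pi_h v,\phi_h)_h = (\Pi_h v,\phi_h) - (\Pi_h v,\phi_h)_h,
\]
so the right-hand side is precisely a quadrature error between two elementwise polynomial continuous functions. Testing with $\phi_h = e_h$, applying the second estimate of Lemma~\ref{la:quad_control} (the symmetry of both inner products lets us place the gradient on $\Pi_h v$), invoking $\|e_h\|_h \sim \|e_h\|$, and finally the $H^1$-stability of $\Pi_h$, yields
\[
\|e_h\|^2 \le c\,\|e_h\|_h^2 \le c h \|e_h\|\,\|\nabla \Pi_h v\| \le c h \|e_h\|\,\|\nabla v\|,
\]
and hence $\|e_h\| \le c h \|\nabla v\|$.

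With this comparison in hand, the triangle inequality together with the standard $L^2$-error bound $\|v - \Pi_h v\| \le c h \|\nabla v\|$ for $v \in H^1_\DD(\O)$ gives $\|v - \tPi_h v\| \le c h \|\nabla v\|$, which is the second summand in the claim. For the gradient bound, the inverse estimate~\eqref{eq:inv_est_der} applied to $e_h \in \cS^1_\DD(\cT_h)$ yields $\|\nabla e_h\| \le c h^{-1}\|e_h\| \le c \|\nabla v\|$, and combining with $\|\nabla \Pi_h v\| \le c \|\nabla v\|$ gives $\|\nabla \tPi_h v\| \le c \|\nabla v\|$. The only nontrivial step is recognizing that the discrepancy $(e_h,\phi_h)_h$ is exactly a quadrature error of the type controlled by Lemma~\ref{la:quad_control}; once this is observed, everything else is the familiar combination of $\Pi_h$-stability, interpolation error, and an inverse inequality.
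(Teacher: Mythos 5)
Your proof is correct and takes essentially the same route as the paper's: both set $e_h=\tPi_h v-\Pi_h v$, reduce $(e_h,\phi_h)_h$ to the quadrature error between $(\Pi_h v,\cdot)$ and $(\Pi_h v,\cdot)_h$, control it via Lemma~\ref{la:quad_control} together with the inverse estimate~\eqref{eq:inv_est_der} and $H^1$-stability of $\Pi_h$, and then conclude by the triangle inequality. The only (cosmetic) differences are that you invoke the second estimate of Lemma~\ref{la:quad_control} directly rather than re-deriving it from the first as the paper does, and you add a brief remark on well-posedness of $\tPi_h$.
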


\begin{proof}
With the standard $L^2$ projection $\Pi_h$ onto $V_h$,
define $\delta_h = \tPi v - \Pi_h v$. We then have
\[
\|\delta_h\|^2 \le \|\delta_h\|_h^2 = (\delta_h, \tPi v - \Pi_h v)_h
= (\delta_h,\Pi_h v) - (\delta_h,\Pi_h v)_h .
\]
Therefore, using Lemma~\ref{la:quad_control}, estimate~\eqref{eq:inv_est_der},
and the $H^1$-stability of $\Pi_h$ we find that
\[
\|\delta_h\|^2 \le ch^2 \|D_h^2(\delta_h \cdot \Pi_h v)\|_{L^1}
= ch^2 \|\nabla \delta_h\| \|\nabla \Pi_h v\|\le ch \|\delta_h\| \|\nabla v\|.
\]
Hence $\|\delta_h\| \le ch \|\nabla v\|$ and another application of an
inverse estimate yields $\|\nabla \delta_h\| \le c\|\nabla v\|$.  We therefore get
\[
\|\nabla \tPi v\| \le \| \nabla \Pi_h v\| + \|\nabla \delta_h \| \le c\|\nabla v\|.
\]
The error estimate follows from a related estimate for $\Pi_h$.
\end{proof}

We often use the dual space $H^{-1}(\O) = (H^1_\DD(\O))'$ which is equipped
with the operator norm
\[
\|\mu\|_{H^{-1}} = \sup_{\phi \in H^1_\DD(\O) \setminus \{0\}} \frac{\dual{\mu}{\phi}}{\|\nabla \phi\|}.
\]
We have the inverse estimate
\[
\|\mu_h \| \le c h^{-1} \|\mu_h\|_{H^{-1}}
\]
for all $\mu_h\in \cS^1_\DD(\cT_h)$.
The Cl\'ement quasi-interpolation operator $\cJ_h:L^1(\O) \to \cS^1(\cT_h)$
is with the sets $\o_z = \supp \vphi_z$, $z\in \cN_h$, defined via
\[
\cJ_h \a = \sum_{z\in \cN_h} \a_z \vphi_z, \quad \a_z = |\o_z|^{-1}  \int_{\o_z} \a \dv{x}.
\]
The variant $\cJ_{h,\DD}: L^1(\O) \to \cS^1_\DD(\cT_h)$ is obtained by setting
$\a_z= 0$ for all $z\in \cN_h\cap \G_\DD$. We remark that we have
\[
(\cJ_{h,\DD} \a, v)_h = (\cJ_h \a,v)_h
\]
for $v\in C(\overline{\O})$ with $v|_{\G_\DD} = 0$.
For $\a\in H^1(\O)$ we have
\[
\|\a-\cJ_h \a\| \le c h \|\nabla \a\|.
\]
A similar estimate holds for $\a\in H^1_\DD(\O)$ and $\cJ_{h,\DD}\a$,
cf., e.g.,~\cite{Bart15-book}.

\subsection{Inverse function theorem}
As in~\cite{DziHut06,QiTaWi09}
we use the following quantitative version of the inverse function theorem
to derive a local error estimate.

\begin{theorem}[Inverse function theorem]\label{quant_inv_fn}
Suppose that $F:X\to X'$ is continuous and assume that
$\tx \in X$ satisfies $\|F(\tx)\|_{X'}\le \k$.  If there
exist $c_L',c_\inv,\veps >0$ such that $F$ is Fr\'echet differentiable
in $B_\veps(\tx)$, with $DF(\tx)$ invertible, and
\[\begin{split}
\|DF(\tx)^{-1}\|_{L(X',X)} & \le c_\inv, \\
\|DF(x_1)-DF(x_2)\|_{L(X,X')} & \le c_L' \|x_1-x_2\|_X
\end{split}\]
for all $x_1,x_2\in B_\veps(\tx)$ with $\veps >0$ so that
$c_L'c_\inv \veps \le 1/2$ and $\k \le \veps/(2 c_\inv)$,
then there exists a unique $x\in B_\veps(\tx)$ such that $F(x)=0$.
\end{theorem}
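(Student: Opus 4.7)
The plan is a standard Banach fixed-point argument applied to the Newton iteration map associated with $F$. Define
\[
T : B_\veps(\tx) \to X, \qquad T(x) = x - DF(\tx)^{-1} F(x).
\]
Since $DF(\tx)$ is invertible, zeros of $F$ in $B_\veps(\tx)$ are in one-to-one correspondence with fixed points of $T$ there. The goal is to verify that $T$ maps the closed ball $\overline{B}_\veps(\tx)$ into itself and is a strict contraction, and then invoke Banach's fixed point theorem.

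First I would check that $T(\tx)$ is close to $\tx$: by definition,
\[
\|T(\tx)-\tx\|_X = \|DF(\tx)^{-1}F(\tx)\|_X \le c_\inv \k \le \veps/2,
\]
using the bound on $\|DF(\tx)^{-1}\|$ and the hypothesis $\k \le \veps/(2c_\inv)$. Next I would establish the contraction estimate. For $x_1,x_2\in B_\veps(\tx)$, write
\[
T(x_1)-T(x_2) = (x_1-x_2) - DF(\tx)^{-1}\bigl(F(x_1)-F(x_2)\bigr),
\]
and use the fundamental theorem of calculus along the segment joining $x_2$ to $x_1$ (which lies in $B_\veps(\tx)$ by convexity) to write $F(x_1)-F(x_2) = \int_0^1 DF(x_2+t(x_1-x_2))(x_1-x_2)\dv{t}$. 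Inserting $DF(\tx)$ inside the integral and factoring out $DF(\tx)^{-1}$ gives
\[
T(x_1)-T(x_2) = DF(\tx)^{-1}\!\int_0^1\!\bigl[DF(\tx)-DF(x_2+t(x_1-x_2))\bigr](x_1-x_2)\dv{t}.
\]
The Lipschitz hypothesis on $DF$ bounds the integrand by $c_L'\veps\|x_1-x_2\|_X$, so the condition $c_L'c_\inv\veps\le 1/2$ yields $\|T(x_1)-T(x_2)\|_X \le \tfrac12 \|x_1-x_2\|_X$.

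Combining these two observations, for any $x \in \overline{B}_\veps(\tx)$,
\[
\|T(x)-\tx\|_X \le \|T(x)-T(\tx)\|_X + \|T(\tx)-\tx\|_X \le \tfrac12 \|x-\tx\|_X + \veps/2 \le \veps,
\]
so $T$ is a self-map of the complete metric space $\overline{B}_\veps(\tx)$, and a $\tfrac12$-contraction. Banach's fixed point theorem yields a unique fixed point $x\in \overline{B}_\veps(\tx)$, which is the unique zero of $F$ in that ball (to obtain uniqueness in the open ball as stated, note that the fixed point actually lies in $B_\veps(\tx)$ since the self-map estimate gives strict inequality whenever $x\ne\tx$ or $\k<\veps/(2c_\inv)$; alternatively any two zeros in $B_\veps(\tx)$ are fixed points of the contraction and must coincide). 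The only mild subtlety is justifying the integral representation of $F(x_1)-F(x_2)$, which follows from Fréchet differentiability of $F$ on the convex set $B_\veps(\tx)$ together with the continuity of $DF$ implied by its Lipschitz property; no step here is genuinely difficult.
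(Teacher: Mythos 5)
Your proof is correct. The paper does not spell out an argument but simply points to Berger's Theorem~3.1.5; the proof of that result is precisely the contraction-mapping argument applied to the simplified Newton map $T(x)=x-DF(\tx)^{-1}F(x)$ that you carry out, so your route coincides with the one the paper invokes.
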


\begin{proof}
The result is an immediate conseqence of the proof of~\cite[Thm.~3.1.5, p.~113]{Berg77-book}.
\end{proof}

We remark that if $F$ is defined on an affine space $\cA = x_\DD + X$ then the theorem
can be applied to $\tF(x) = F(x_\DD+x)$. The theorem also implies the superlinear
convergence of the Newton-type iteration $x^{k+1} = x^k - DF(\tx)^{-1} F(x^k)$ and
of the classical Newton iteration if a bound on the the inverse of the Jacobian
holds in $B_\veps(\tx)$. For quadratic convergence, a bound on the second variation
of $F$ is required.

\section{Error estimate}\label{sec:main_est}
We recall that harmonic maps into spheres are defined as
stationary pairs $(u,\l)\in \cA$ for the functional
\[
L(u,\l) = \frac12 \int_\O |\nabla u|^2 \dv{x} + \frac12 \dual{\l}{|u|^2 -1}
\]
An optimal pair satisfies the Euler--Lagrange equations~\eqref{eq:euler_lagrange}
with
\[
\l = -|\nabla u|^2.
\]
A finite element approximation is sought in the space of admissible pairs
\[
\cA_h = (u_{\DD,h},0) + X_h,
\]
with $u_{\DD,h} = \cI_h \tu_\DD$ for a continuous extension~$\tu_\DD$ of $u_\DD$
and the homogeneous space $X_h$ defined via
\[
X_h = \cS^1_\DD(\cT_h)^m \times \cS^1_\DD(\cT_h) \subset H^1_\DD(\O;\R^m) \times H^{-1}(\O).
\]
Here, no uniform bounds are included in the definition of $X_h$
in order to have a Hilbert space structure. Discrete harmonic maps
are stationary configurations for the functional
\[
L_h(u_h,\l_h) = \frac12 \int_\O |\nabla u_h|^2 \dv{x}
+ \frac12 \int_\O \cI_h \big[\l_h (|u_h|^2 -1) \big]\dv{x}.
\]
Discrete harmonic maps $(u_h,\l_h) \in \cA_h$ satisfy, cf.~\cite{QiTaWi09,Bart15-book},
\[\begin{split}
(\nabla u_h,\nabla v_h) + (\l_h,u_h\cdot v_h)_h &= 0, \\
(\mu_h, |u_h|^2 - 1)_h  \hphantom{+ (\l_h,u_h\cdot v_h)_h}  & = 0,
\end{split}\]
for all $(v_h,\mu_h) \in X_h$.
The saddle-point system can be formulated as a nonlinear equation with
a mapping $F_h: \cA_h \to X_h'$ via
\[
F_h(u_h,\l_h)[(v_h,\mu_h)] = (\nabla u_h,\nabla v_h) + (\l_h,u_h\cdot v_h)_h + (\mu_h, |u_h|^2 - 1)_h.
\]
The variational derivative of $F_h$ is given by
\[\begin{split}
DF_h(u_h,\l_h)[(v_h,\mu_h),& (w_h,\eta_h)] = (\nabla v_h,\nabla w_h) \\
& + (\l_h,w_h\cdot v_h)_h + (\mu_h, u_h\cdot w_h)_h + (\eta_h,u_h \cdot v_h).
\end{split}\]
To investigate the invertibility of the linear operator
$DF_h(\tu_h,\tl_h): X_h\to X_h'$ we resort to established
theories for linear saddle-point problems on Hilbert spaces and define for a given
pair $(\tu_h,\tl_h)$ the  bilinear forms
\begin{equation}\label{eq:bilinear_forms_discrete}
\begin{split}
a_{\tl_h}(v_h,w_h) &= (\nabla v_h,\nabla w_h) + (\tl_h,w_h\cdot v_h)_h, \\
b_{\tu_h}(\mu_h,v_h) &= (\mu_h, \tu_h\cdot v_h)_h,
\end{split}
\end{equation}
for all $v_h,w_h\in \cS^1_\DD(\cT_h)^m$ and $\mu_h \in \cS^1_\DD(\cT_h)$.
The invertibility is equivalent to the existence of a unique solution
$(v_h,\mu_h) \in X_h$ for every right-hand side $(f_h,g_h) \in X_h'$
such that
\[\begin{split}
a_{\tl_h}(v_h,w_h) + b_{\tu_h}(\mu_h,w_h) &= (f_h,w_h), \\
b_{\tu_h}(\eta_h, v_h) \, \hphantom{+ b_{\tu_h}(\mu_h,w_h)}  &= (g_h,\eta_h),
\end{split}\]
for all $(w_h,\eta_h)\in X_h$. Sufficient for this is that $a_{\tl_h}$ is coercive
on the kernel of $b_{\tu_h}$ and that $b_{\tu_h}$ satisfies an inf-sup condition,
cf.~\cite{Babu70,Brez74}.

\begin{lemma}[Invertibility]\label{la:invert}
(i) For every $\tu_h\in \cS^1(\cT_h)^m$ the
bilinear form $b_{\tu_h}$ satisfies the inf-sup condition
\[
\sup_{v_h \in \cS^1_\DD(\cT_h)^m \setminus \{0\}} \frac{b_{\tu_h}(\mu_h, v_h)}{\|\nabla v_h\|}
\ge c \|\tu_h\|_{W^{1,\infty}}^{-1} \|\mu_h\|_{H^{-1}}
\]
for all $\mu_h \in \cS^1_\DD(\cT_h)$. Moreover $b_{\tu_h}$ is continuous
with bound $c\|\tu_h\|_{W^{1,\infty}}$. \\
(ii) Assume that the the pair $(u,\l) \in \cA$ satisfies
\begin{equation}\label{eq:reg_cond}
u \in H^2(\O;\R^m) \cap W^{1,\infty}(\O;\R^m), \quad \l \in  H^1(\O) \cap L^\infty(\O),
\end{equation}
and that there exists $c_a>0$ such that
\begin{equation}\label{eq:stable_hm}
a_\l(v,v) \ge c_a \|\nabla v\|^2 \quad \text{for all $v\in T_u$}.
\end{equation}
Define  $(\tu_h,\tl_h) \in \cA_h$ via
\[
\tu_h= \cI_hu, \quad \tl_h = \cJ_{h,\DD} \l.
\]
Then for~$h$ sufficiently small we have
\[
a_{\tl_h}(v_h,v_h) \ge (c_a/2) \|\nabla v_h\|^2
\]
for all $v_h \in \cS^1_\DD(\cT_h)^m)$ with $\cI_h (v_h\cdot \tu_h)=0$.
Moreover, $a_{\tl_h}$ is continuous with bound $c \|\tl_h\|$. \\
(iii) Under the conditions of~(ii) the operator $DF_h(\tu_h,\tl_h)$ is
invertible with $\|DF_h(\tu_h,\tl_h)\|_{L(X_h',X_h)} \le c_\inv$
for a constant~$c_\inv>0$ that depends on $\|u\|_{W^{1,\infty}}$,
$\|\l\|$, and $c_a$. The smallness condition on $h$ additionally
depends on $\|\nabla \l\|$ and $\|D^2u\|$.
\end{lemma}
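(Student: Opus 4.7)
The plan is to treat (i) and (ii) separately and then combine them via the Babu\v{s}ka--Brezzi theory in (iii); both (i) and (ii) are discrete counterparts of arguments sketched in the introduction, with a quantitative perturbation argument accounting for the passage between $T_u$ and its discrete analogue.

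For (i), given $\mu_h \in \cS^1_\DD(\cT_h)$ I would pick $\phi \in H^1_\DD(\O)$ with $\|\nabla \phi\| = 1$ and $\dual{\mu_h}{\phi} \ge \|\mu_h\|_{H^{-1}}/2$, set $\phi_h := \tPi_h \phi \in \cS^1_\DD(\cT_h)$, and test with $v_h := \cI_h(\phi_h \tu_h)$, which lies in $\cS^1_\DD(\cT_h)^m$ because $\phi_h$ vanishes on~$\GD$. Since in the application $|\tu_h(z)| = 1$ at every vertex, the nodal identity $(\tu_h \cdot v_h)(z) = \phi_h(z)$ holds, so
\[
(\mu_h, \tu_h \cdot v_h)_h = (\mu_h, \phi_h)_h = (\mu_h, \phi) = \dual{\mu_h}{\phi} \ge \|\mu_h\|_{H^{-1}}/2
\]
by the defining relation of $\tPi_h$. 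Lemma~\ref{la:l2proj}, the product rule, and the $H^1$-stability of $\cI_h$ on elementwise polynomials then yield $\|\nabla v_h\| \le c \|\tu_h\|_{W^{1,\infty}}$, from which the claimed inf-sup bound follows. Continuity of $b_{\tu_h}$ is obtained by viewing the discrete product as a continuous one plus a quadrature error, which Lemma~\ref{la:quad_control} and the inverse estimate $\|\mu_h\| \le c h^{-1} \|\mu_h\|_{H^{-1}}$ control.

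For (ii), the approach is to compare $v_h$ with its pointwise tangential projection $w := v_h - (v_h \cdot u) u \in T_u$ and transfer coercivity from $a_\l$. The scalar defect $\a := v_h \cdot u$ vanishes at every node because $u(z) = \tu_h(z)$ and $\cI_h(v_h \cdot \tu_h) = 0$, so $\a = (\id - \cI_h)(v_h \cdot u)$. An elementwise interpolation estimate together with $D_h^2(v_h \cdot u) = 2 \nabla v_h \cdot \nabla u + v_h \cdot D^2 u$ (the $D^2 v_h$ contribution drops because $v_h$ is affine on each element) and the inverse inequality~\eqref{eq:inv_est_inf} give
\[
\|\a\| + h \|\nabla \a\| \le c h^2 \big(\|u\|_{W^{1,\infty}} + \g_\inv(h) \|D^2 u\|\big) \|\nabla v_h\|,
\]
which is $o(1) \|\nabla v_h\|$ in the $H^1$ norm for $d \le 3$. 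Hence $\|\nabla(v_h - w)\| = o(1) \|\nabla v_h\|$, making $\|\nabla w\|$ and $\|\nabla v_h\|$ comparable. I would then expand
\[
a_{\tl_h}(v_h, v_h) = a_\l(w, w) + \big[\|\nabla v_h\|^2 - \|\nabla w\|^2\big] + \big[(\tl_h, |v_h|^2)_h - (\l, |w|^2)\big]
\]
and bound the two bracketed corrections: the first by Cauchy--Schwarz and the defect estimate; the second by combining the mass-lumping estimate of Lemma~\ref{la:quad_control} with~\eqref{eq:inv_est_inf}, the Cl\'ement error $\|\tl_h - \l\| \le c h \|\nabla \l\|$, and a Lipschitz-type bound on $|v_h|^2 - |w|^2$. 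Each correction carries a positive power of $h$, so for $h$ small enough (depending on $\|\nabla \l\|$, $\|D^2 u\|$, $\|u\|_{W^{1,\infty}}$) they are absorbed into $c_a \|\nabla v_h\|^2$ with room to spare, and~\eqref{eq:stable_hm} delivers the claim. Continuity of $a_{\tl_h}$ is direct via Cauchy--Schwarz and~\eqref{eq:inv_est_inf}.

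Part (iii) is then an application of the Babu\v{s}ka--Brezzi theorem on the finite-dimensional Hilbert space $X_h$: the inf-sup from (i), the coercivity on $\ker b_{\tu_h}$ from (ii), and the continuity of both forms combine to yield invertibility of $DF_h(\tu_h, \tl_h)$ with operator-norm bound determined by those constants, which in turn depend only on $\|u\|_{W^{1,\infty}}$ (through $\|\tu_h\|_{W^{1,\infty}}$), $\|\l\|$ (through $\|\tl_h\|$), and $c_a$; the $h$-smallness condition is inherited from (ii). The main obstacle is the coercivity step in (ii): unlike the inf-sup, which admits a direct construction, coercivity on the discrete kernel requires a sharp quantitative comparison with~$T_u$, and the argument depends critically on $\g_\inv(h) = h^{-1/2}$ for $d = 3$ to keep the defect subcritical; in higher dimensions stronger regularity of~$u$ or a different argument would be required.
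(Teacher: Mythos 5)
Your proposal matches the paper's proof essentially step by step: part (i) uses the same test function $v_h = \cI_h((\tPi_h\phi)\,\tu_h)$ with the nodal unit-length identity and the $H^1$-stability of $\tPi_h$ and $\cI_h$; part (ii) uses the same tangential correction $w = v_h - (v_h\cdot u)\,u \in T_u$, the vanishing of $\cI_h(v_h\cdot u)$, nodal interpolation estimates plus the inverse estimate~\eqref{eq:inv_est_inf} to make the defect $O(h\,\g_\inv(h))\|\nabla v_h\|$, and the same three-term decomposition of $a_{\tl_h}(v_h,v_h) - a_\l(w,w)$ controlled via Lemma~\ref{la:quad_control} and the Cl\'ement error; and part (iii) is the same Babu\v{s}ka--Brezzi application with the same dependency tracking. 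The only cosmetic differences are that you bound the scalar defect $\a = v_h\cdot u$ first and transfer to $\|\nabla(v_h - w)\|$ (whereas the paper estimates the vector directly), and you take a $\ge 1/2$ Hahn--Banach selection where the paper uses equality; neither affects the argument.
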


\begin{proof}
(i) To verify the inf-sup condition for $b_{\tl_h}$ we follow~\cite{QiTaWi09}
and note that the Hahn--Banach theorem implies that for given $\mu_h\in \cS^1_\DD(\cT_h)$
there exists $\phi\in H^1_\DD(\O)$ with $\|\nabla \phi\| = 1$ and
\[
(\mu_h,\phi) = \|\mu_h\|_{H^{-1}}.
\]
With the modified $L^2$ projection $\tPi_h$ we define
\[
v_h = \cI_h ((\tPi_h \phi) \tu_h).
\]
Since $|\tu_h(z)|^2 = 1$ for all $z\in \cN_h$ this choice implies that we have
\[
b_{\tu_h}(\mu_h, v_h) = (\mu_h, (\tPi_h \phi) \tu_h \cdot \tu_h)_h
= (\mu_h, \tPi_h \phi)_h = (\mu_h,\phi) = \|\mu_h\|_{H^{-1}}.
\]
Using the $H^1$-stability of $\cI_h$ on elementwise polynomials and
the $H^1$ stability of $\tPi_h$ on quasi-uniform meshes, we find that
\[
\|\nabla v_h\| \le c \|\nabla( (\tPi_h \phi) \tu_h)\| \le c \|\nabla \tPi_h \phi\| \|\tu_h\|_{W^{1,\infty}}
 \le c \|\nabla \phi\| \|\tu_h\|_{W^{1,\infty}},
\]
i.e., $\|\nabla v_h\| \| \tu_h \|_{W^{1,\infty}}^{-1} \le c$.
Combining the last two estimates leads to
\[
b_{\tu_h}(\mu_h, v_h)  \ge c \|\tu_h\|_{W^{1,\infty}}^{-1} \|\nabla v_h\| \|\mu_h\|_{H^{-1}},
\]
which is the asserted inf-sup property. Using Lemma~\ref{la:quad_control}
and an inverse estimate, we verify the boundedness of $b_{\tu_h}$, i.e.,
\[\begin{split}
|b_{\tu_h}(\mu_h,v_h)| &\le |(\mu_h, \tu_h \cdot v_h)| + c h \|\mu_h\| \|\nabla (\tu_h \cdot v_h)\| \\
&\le \|\mu_h\|_{H^{-1}} \|\tu_h\|_{W^{1,\infty}} \|\nabla v_h\|,
\end{split}\]
(ii) Given $v_h \in \cS^1_\DD(\cT_h)^m$ with $\cI_h (v_h\cdot \tu_h)=0$
the function
\[
\tv^h = v_h - (v_h \cdot u) u
\]
satisfies $\tv^h \in T_u$ and hence we have
$a_\l(\tv^h,\tv^h) \ge c_a \|\nabla \tv^h\|^2$. Using that we may
replace $\tl_h$ by $\cJ_h \l$ in $a_{\tl_h}$, Lemma~\ref{la:quad_control}
and~\eqref{eq:inv_est_inf} lead to
\[
\big|(\tl_h, |v_h|^2)_h - (\cJ_h \l, |v_h|^2)\big|
\le c h \g_\inv(h) \|\cJ_h\l \| \|\nabla v_h\|^2.
\]
With this estimate we find that
\[\begin{split}
\big|a_{\tl_h}(v_h, & v_h) - a_\l(\tv^h,\tv^h) \big| \\
& \le \|\nabla (v_h- \tv^h)\| \big( \|\nabla v_h\| + \|\nabla \tv^h\|\big)
+ c h \g_\inv (h) \| \cJ_h \l \| \|\nabla v_h \|^2 \\
& \quad
+ \|\cJ_h \l\| \|v_h - \tv_h\|_{L^4} \big(\|v_h\|_{L^4} + \|\tv^h\|_{L^4}
+ \|\cJ_h \l - \l\| \| \tv^h\|_{L^4}^2\big).
\end{split}\]
To bound the terms on the right-hand side we note that
$\cI_h ((v_h\cdot u)u) = 0$ and hence
\[\begin{split}
\|\nabla (v_h -\tv^h)\| &\le c h \|D_h^2 ((v_h \cdot u)u) \| \\
& \le c h \big(\|\nabla v_h\| \|\nabla u\|_{L^\infty}
+ \|v_h\|_{L^\infty} (\|D^2 u\| + \|\nabla u\|_{L^\infty}^2) \big) \\
& \le c h \g_\inv(h) \|\nabla v_h\|.
\end{split}\]
A Sobolev embedding and a Poincar\'e inequality show that the same
bound applies to $\|v_h -\tv^h\|_{L^4}$. Moreover, we have that
\[
\|\tv^h \| + \|\nabla \tv^h\| + \|\tv^h\|_{L^4}  + \|v_h\|_{L^4} \le c \|\nabla v_h\|.
\]
Noting stability and approximation properties of the Cl\'ement
quasi-interpolant, the combination of the estimates implies that
\[
a_{\tl_h}(v_h,v_h) \ge  a_\l(\tv^h,\tv^h) - c h\g_\inv(h) \|\nabla v_h\|^2,
\]
which is the asserted coercivity property. Finally, as
a consequence of Lemma~\ref{la:quad_control} and inverse
estimates, $a_{\tl_h}$ satisfies the bound
\[\begin{split}
|a_{\tl_h}(v_h,w_h)| & \le \|\nabla v_h\| \|\nabla w_h\|
+ |(\tl_h,v_h \cdot w_h)| + c h \|\tl_h\| \|\nabla (v_h \cdot w_h)\| \\
&\le (1+ c h \g_\inv(h)) \|\tl_h\| \|\nabla v_h\| \|\nabla w_h\|.
\end{split}\]
(iii) The inf-sup condition for $b_{\tu_h}$ and the
coercivity of $a_{\tl_h}$ on the kernel of $b_{\tu_h}$, which is given by
$\ker b_{\tu_h} = \{ v_h \in \cS^1_\DD(\cT_h)^m: \cI_h (v_h\cdot \tu_h)=0\}$
imply the invertibility of $DF(\tu_h,\tl_h)$, cf.~\cite{Babu70,Brez74}.
The $W^{1,\infty}$ stability of $\cI_h$ and the $L^2$ stability of
$\cJ_h$ imply that the bounds on $DF(\tu_h,\tl_h)$ depend on $\|u\|_{W^{1,\infty}}$
and $\|\l\|$.
\end{proof}

The second auxiliary result bounds the operator norm of $F(\tu_h,\tl_h)$
for interpolants of a regular harmonic map $(u,\l)$.

\begin{lemma}[Residual of interpolants]\label{la:smallness}
Assume that a harmonic map $(u,\l)\in\cA$ satisfies~\eqref{eq:reg_cond}
and define $(\tu_h,\tl_h) \in \cA_h$ via $\tu_h= \cI_hu$ and $\tl_h = \cJ_{h,\DD} \l$.
We then have that
\[
\big|F_h(\tu_h,\tl_h)[(v_h,\mu_h)]\big|
\le ch \|(v_h,\mu_h)\|_{X_h},
\]
where $c$ depends on $\|D^2u\|$ and $\|u\|_{W^{1,\infty}}$.
\end{lemma}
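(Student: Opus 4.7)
First I would expand
\[
F_h(\tu_h,\tl_h)[(v_h,\mu_h)] = (\nabla \tu_h,\nabla v_h) + (\tl_h,\tu_h\cdot v_h)_h + (\mu_h, |\tu_h|^2 - 1)_h
\]
and exploit that $\tu_h=\cI_h u$ inherits the unit length constraint at all nodes: since $|u(z)|^2=1$ for every $z\in\cN_h$, the quadrature expression $(\mu_h,|\tu_h|^2-1)_h = \sum_z \b_z \mu_h(z)\big(|\cI_h u(z)|^2-1\big)$ vanishes identically. This kills the $\mu_h$-component of the residual, so only the first two terms remain and it suffices to bound them by $\|\nabla v_h\|\le\|(v_h,\mu_h)\|_{X_h}$.

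Next, I would subtract the continuous Euler--Lagrange identity $(\nabla u,\nabla v_h) + (\l, u\cdot v_h) = 0$, valid for $v_h\in H^1_\DD(\O;\R^m)$ since $\l=-|\nabla u|^2$, to rewrite the residual as
\[
(\nabla(\tu_h-u),\nabla v_h) + \big[(\tl_h,\tu_h\cdot v_h)_h - (\l,u\cdot v_h)\big].
\]
The first piece is controlled directly by the nodal $H^1$ interpolation estimate $\|\nabla(u-\cI_h u)\|\le ch\|D^2 u\|$. The bracketed multiplier term is where the real work lies.

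Here the main obstacle is that $\l$ need not vanish on $\G_\DD$ whereas $\tl_h=\cJ_{h,\DD}\l$ does, so a direct $L^2$ bound $\|\tl_h-\l\|$ will generally not be $O(h)$ because of a boundary layer. To sidestep this I would observe that $\tu_h\cdot v_h\in C(\overline{\O})$ vanishes on $\G_\DD$ (since $v_h|_{\G_\DD}=0$), and therefore the identity $(\cJ_{h,\DD}\l,\phi)_h = (\cJ_h\l,\phi)_h$ from the preliminaries allows the replacement $(\tl_h,\tu_h\cdot v_h)_h = (\cJ_h\l,\tu_h\cdot v_h)_h$ with the \emph{full} Cl\'ement operator. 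I would then decompose
\[
(\cJ_h\l,\tu_h\cdot v_h)_h - (\l,u\cdot v_h) = \mathrm{I} + \mathrm{II} + \mathrm{III},
\]
with $\mathrm{I}=(\cJ_h\l,\tu_h\cdot v_h)_h-(\cJ_h\l,\tu_h\cdot v_h)$, $\mathrm{II}=(\cJ_h\l-\l,\tu_h\cdot v_h)$, and $\mathrm{III}=(\l,(\tu_h-u)\cdot v_h)$.

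To estimate these I would invoke the second variant of Lemma~\ref{la:quad_control} on the elementwise polynomial factor $\tu_h\cdot v_h$, giving $|\mathrm{I}|\le ch\|\cJ_h\l\|\|\nabla(\tu_h\cdot v_h)\|$, and then use the product rule together with the $W^{1,\infty}$ stability of $\cI_h$ and a Poincar\'e inequality for $v_h$ to obtain $\|\nabla(\tu_h\cdot v_h)\|\le c\|u\|_{W^{1,\infty}}\|\nabla v_h\|$. For $\mathrm{II}$ I would combine the standard Cl\'ement bound $\|\cJ_h\l-\l\|\le ch\|\nabla\l\|$ with $\|\tu_h\cdot v_h\|\le c\|u\|_{L^\infty}\|v_h\|$, and for $\mathrm{III}$ the $L^2$ nodal interpolation estimate $\|\tu_h-u\|\le ch^2\|D^2u\|$ together with $\|\l\|_{L^\infty}$. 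Summing, using $\|v_h\|\le c_P\|\nabla v_h\|$, and noting that $\l=-|\nabla u|^2$ expresses $\|\l\|_{L^\infty}$ and $\|\nabla\l\|$ in terms of $\|u\|_{W^{1,\infty}}$ and $\|D^2u\|$, delivers the desired $ch\,\|(v_h,\mu_h)\|_{X_h}$ bound.
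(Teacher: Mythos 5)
Your proposal is correct and follows essentially the same strategy as the paper's proof: kill the $\mu_h$-term via $|\cI_h u|^2=1$ at nodes, subtract the continuous Euler--Lagrange identity, bound $(\nabla(\tu_h-u),\nabla v_h)$ by nodal interpolation, replace $\cJ_{h,\DD}\l$ by $\cJ_h\l$ using $\tu_h\cdot v_h|_{\G_\DD}=0$, and control the multiplier term through Lemma~\ref{la:quad_control} plus Cl\'ement approximation. The only cosmetic difference is that you split the multiplier remainder into three pieces centered on $\tu_h\cdot v_h$ (invoking the polynomial variant of the quadrature lemma plus an extra interpolation term $(\l,(\tu_h-u)\cdot v_h)$), while the paper uses a two-piece split centered on $u\cdot v_h$ (invoking the $H^2$ variant), which amounts to the same estimate.
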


\begin{proof}
The pair $(u,\l)$ satisfies for all $(v,\mu)\in X$ the identity
$F(u,\l)[(v,\mu)]=0$, where
\[
F(u,\l)[(v,\mu)] = (\nabla u,\nabla v) + (\l,u\cdot v) + (\mu, |u|^2 - 1).
\]
Since $X_h\subset X$ we thus have that
\[\begin{split}
\big|F_h(\tu_h,\tl_h) & [(v_h,\mu_h)]\big|
= \big|F_h(\tu_h,\tl_h)[(v_h,\mu_h)]- F(u,\l)[(v_h,\mu_h)] \big| \\
&\le \big|(\nabla [\tu_h-u],\nabla v_h)\big|
+ \big|(\tl_h,\tu_h\cdot v_h)_h - (\l,u\cdot v_h)\big| = I+II,
\end{split}\]
where we used that $|u|^2 = \cI_h |\tu_h|^2 =1$, so that contributions
involving $\mu_h$ vanish.
For the first term we deduce with nodal interpolation estimates that
\[
I \le  ch \|D^2 u\|\|\nabla v_h\|.
\]
To bound the second term we first note that $\tu_h\cdot v_h|_{\G_\DD} = 0$
so that we may replace $\tl_h$ by $\cJ_h \l$.
With Lemma~\ref{la:quad_control}, inverse estimates, and $\tu_h = \cI_h u$, we find that
\[\begin{split}
II  & \le \big|(\cJ_h \l ,\tu_h\cdot v_h)_h - (\cJ_h \l ,u \cdot v_h) \big|
+ \big| (\cJ_h \l ,u \cdot v_h)) - (\l,u\cdot v_h)\big| \\
&\le c h^2 \big(\|\nabla \cJ_h \l \| \|\nabla \cI_h (u \cdot v_h)\| + \|\cJ_h \l\| \|D_h^2 (u\cdot v_h)\|\big)
+ c h \|\nabla \l\| \|u \cdot v_h\|  \\
&\le c h \|\nabla v_h\| \|\l\|_{H^1} \big(\|u\|_{W^{1,\infty}} + \|D^2 u\|\big).
\end{split}\]
The combination of the estimates implies the result.
\end{proof}

To derive an error estimate using the inverse function theorem a local Lipschitz
continuity property for $DF_h$ is required.

\begin{lemma}[Lipschitz estimate]\label{la:lip_bd}
For all $(u_h,\l_h), (\tu_h,\tl_h) \in \cA_h$ we have
\[
\begin{split}
\big| &DF_h(u_h,\l_h)[(v_h,\mu_h),(w_h,\eta_h)] - DF_h(\tu_h,\tl_h)[(v_h,\mu_h),(w_h,\eta_h)]\big| \\
&\le c \g_\inv(h) \|(u_h-\tu_h,\l_h -\tl_h) \|_{X_h} \|(v_h,\mu_h)\|_{X_h} \|(w_h,\eta_h)\|_{X_h}.
\end{split}
\]
\end{lemma}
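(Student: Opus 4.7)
The plan is to expand the difference term-by-term. Upon subtracting $DF_h(\tu_h,\tl_h)$ from $DF_h(u_h,\l_h)$, the gradient contribution $(\nabla v_h,\nabla w_h)$ cancels and the remainder splits into three terms,
\[
(\l_h-\tl_h, w_h\cdot v_h)_h + (\mu_h, (u_h-\tu_h)\cdot w_h)_h + (\eta_h, (u_h-\tu_h)\cdot v_h),
\]
each of which will be bounded separately by a product of three $X_h$-type factors.

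For each of the first two terms I would first apply the second inequality of Lemma~\ref{la:quad_control} to replace the discrete inner product by its $L^2$ counterpart; the resulting $L^2$ pairing is then controlled via the $H^{-1}$--$H^1$ duality by the $H^{-1}$ norm of the multiplier ($\l_h-\tl_h$ and $\mu_h$, respectively) times the $H^1$ norm of the product of the two remaining finite element functions. The product rule combined with the inverse Sobolev estimate~\eqref{eq:inv_est_inf} gives
\[
\|\nabla(\phi_h\cdot\psi_h)\| \le c\,\g_\inv(h)\,\|\nabla\phi_h\|\,\|\nabla\psi_h\|,
\]
which accounts for the factor $\g_\inv(h)$ in the claim. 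The $O(h)$ quadrature error produced by Lemma~\ref{la:quad_control} carries an extra $L^2$-norm of the multiplier which is absorbed via the inverse estimate $\|\nu_h\| \le c h^{-1}\|\nu_h\|_{H^{-1}}$ recorded in Section~\ref{sec:prelim}. The third term is already an $L^2$ inner product, so the Lemma~\ref{la:quad_control} step is skipped and the same duality-plus-product-rule argument is applied directly.

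Summing the three bounds and estimating each of $\|\l_h-\tl_h\|_{H^{-1}}$, $\|\nabla(u_h-\tu_h)\|$, $\|\mu_h\|_{H^{-1}}$, $\|\nabla v_h\|$, $\|\eta_h\|_{H^{-1}}$, $\|\nabla w_h\|$ by the corresponding $X_h$-norm of the pair it belongs to yields the stated trilinear estimate.

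The only real subtlety is bookkeeping to ensure that $\g_\inv(h)$ enters only to the first power: in each summand coming from the product rule for $\nabla(\phi_h\cdot\psi_h)$, exactly one of the two factors needs the inverse Sobolev embedding to pass from $\|\nabla\cdot\|$ to $\|\cdot\|_{L^\infty}$, while the other retains its $H^1$ norm; applying~\eqref{eq:inv_est_inf} twice would produce an unwanted $\g_\inv(h)^2$.
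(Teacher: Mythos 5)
Your proposal is correct and follows exactly the same strategy as the paper: decompose the difference into the three remaining terms, use Lemma~\ref{la:quad_control} plus the $H^{-1}$ inverse estimate to pass from the discrete to the $L^2$ pairing, and bound $\|\nabla(\phi_h\cdot\psi_h)\|$ by the product rule together with~\eqref{eq:inv_est_inf}. Your closing remark about why only a single power of $\g_\inv(h)$ appears is accurate and, if anything, spells out a point the paper leaves implicit.
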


\begin{proof}
We have
\[\begin{split}
\big| &DF_h(u_h,\l_h)[(v_h,\mu_h),(w_h,\eta_h)] - DF_h(\tu_h,\tl_h)[(v_h,\mu_h),(w_h,\eta_h)]\big| \\
& \le \big|(\l_h-\tl_h,w_h\cdot v_h)_h  \big|
+ \big|(\mu_h, [u_h-\tu_h]\cdot w_h)_h  \big|
+ \big|(\eta_h,[u_h-\tu_h]\cdot v_h)_h\big|.
\end{split}\]
To estimate the terms on the right-hand side we consider the first term
and use Lemma~\ref{la:quad_control} and inverse estimates to deduce that
\[\begin{split}
\big|(\l_h-\tl_h,w_h\cdot v_h)_h  \big|
& \le  \big| (\l_h-\tl_h, w_h\cdot v_h)  \big| + c h \| (\l_h - \tl_h)\| \|\nabla (w_h \cdot v_h)\| \\
& \le c \|\l_h -\tl_h \|_{H^{-1}} \|\nabla (w_h \cdot v_h)\|  \\
& \le c \g_\inv(h) \|\l_h -\tl_h \|_{H^{-1}}  \|\nabla w_h\| \|\nabla v_h\|.
\end{split}\]
The other terms are estimated analogously.
\end{proof}

The quasi-optimal error estimate results from an application of the inverse function
theorem, cf. Theorem~\ref{quant_inv_fn}.

\begin{theorem}[Error estimate]
Let $u\in H^1(\O;\R^m)$ be a harmonic map such that with $\l =-|\nabla u|^2$
the pair $(u,\l) \in \cA$ satisfies~\eqref{eq:reg_cond} and~\eqref{eq:stable_hm}.
Then, for $h$ sufficiently small,
 there exists a unique solution $(u_h,\l_h) \in \cA_h$ for $F_h(u_h,\l_h)=0$
in a neighborhood $B_\veps(u,\l)$ with $\veps = c \g_\inv(h)^{-1}$ that satisfies
\[
\|\nabla (u -u_h)\| + \|\l-\l_h\|_{H^{-1}} \le c_u h.
\]
\end{theorem}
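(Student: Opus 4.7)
The plan is to apply the quantitative inverse function theorem, Theorem~\ref{quant_inv_fn}, to the nonlinear map $F_h:\cA_h \to X_h'$ at the reference point $\tx = (\tu_h,\tl_h) := (\cI_h u,\, \cJ_{h,\DD}\l)$. The standard shift $\tF_h(y) = F_h(\tx + y)$ places us in the Hilbert space $X_h$ and reduces the task to verifying the three hypotheses of the theorem.

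These hypotheses are already assembled from the preceding lemmas. Lemma~\ref{la:smallness} yields the residual bound $\|F_h(\tx)\|_{X_h'} \le \k$ with $\k = c_u h$. Lemma~\ref{la:invert}(iii) provides, for $h$ sufficiently small, an $h$-independent inverse bound $\|DF_h(\tx)^{-1}\|_{L(X_h',X_h)} \le c_\inv$. Lemma~\ref{la:lip_bd} furnishes a Lipschitz estimate for $DF_h$ on all of $\cA_h$ with constant $c_L' = c\,\g_\inv(h)$.

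I then balance the two smallness conditions of Theorem~\ref{quant_inv_fn}. The condition $c_L' c_\inv \veps \le 1/2$ forces $\veps \le c/\g_\inv(h)$, while $\k \le \veps/(2 c_\inv)$ forces $\veps \ge 2 c_\inv c_u h$. Both are compatible provided $h\,\g_\inv(h) \to 0$, which holds for $d \le 3$ and $h$ sufficiently small. Choosing $\veps = \bar c\,\g_\inv(h)^{-1}$ for a suitably small constant $\bar c$, the theorem delivers a unique zero $(u_h,\l_h) \in B_\veps(\tx)$ of $F_h$. The contraction argument underlying the proof of Theorem~\ref{quant_inv_fn} additionally gives the quantitative distance estimate $\|(u_h,\l_h) - \tx\|_{X_h} \le 2 c_\inv \k \le c_u h$.

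The claimed error estimate then follows by triangle inequality together with standard interpolation bounds for $\tx$: the nodal estimate $\|\nabla(u-\tu_h)\| \le ch\,\|D^2 u\|$ and the Cl\'ement estimate $\|\l - \tl_h\|_{H^{-1}} \le c\,\|\l - \cJ_{h,\DD}\l\| \le ch\,\|\nabla \l\|$. Since the interpolation error $ch$ is much smaller than $\veps \sim \g_\inv(h)^{-1}$ for small $h$, the discrete solution $(u_h,\l_h)$ in fact sits inside $B_\veps(u,\l)$, and uniqueness in $B_\veps(u,\l)$ is inherited from uniqueness in $B_\veps(\tx)$. The main obstacle is precisely the balancing of the two IFT smallness conditions against the mesh-dependent Lipschitz constant $c_L' = c\,\g_\inv(h)$: this is what dictates the reduced neighborhood radius $\veps \sim \g_\inv(h)^{-1}$ and restricts the analysis to spatial dimensions $d \le 3$.
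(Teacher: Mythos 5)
Your overall structure coincides with the paper's: apply Theorem~\ref{quant_inv_fn} at $\tx=(\cI_h u,\cJ_{h,\DD}\l)$ with $\k=c_u h$ (Lemma~\ref{la:smallness}), $c_\inv=c_u$ (Lemma~\ref{la:invert}), and $c_L'=c\,\g_\inv(h)$ (Lemma~\ref{la:lip_bd}), balance the two smallness conditions, and then add the interpolation error by the triangle inequality. Extracting the distance bound $\|(u_h,\l_h)-\tx\|_{X_h}\le 2c_\inv\k$ from the contraction underlying the proof of Theorem~\ref{quant_inv_fn} is legitimate; the paper gets the same bound equivalently by observing that $\veps$ may be taken as small as $c_u h$, so there is no real difference there.

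There is, however, a genuine gap in the final step. You write $\|\l-\tl_h\|_{H^{-1}}\le c\|\l-\cJ_{h,\DD}\l\|\le ch\|\nabla\l\|$, i.e.\ you invoke the Cl\'ement approximation estimate in $L^2$ for the boundary-modified operator $\cJ_{h,\DD}$ applied to $\l$. But $\l=-|\nabla u|^2$ is generally \emph{not} zero on $\G_\DD$, whereas $\cJ_{h,\DD}$ forces zero nodal values there; the estimate $\|\a-\cJ_{h,\DD}\a\|\le ch\|\nabla\a\|$ recorded in Section~\ref{sec:prelim} holds only for $\a\in H^1_\DD(\O)$. For a general $\l\in H^1\cap L^\infty$, the discrepancy $\l-\cJ_{h,\DD}\l$ is of size $O(\|\l\|_{L^\infty})$ on a boundary strip of width $O(h)$, so $\|\l-\cJ_{h,\DD}\l\|_{L^2}$ is only $O(h^{1/2})$, not $O(h)$. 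The paper closes this gap by splitting $\l-\cJ_{h,\DD}\l=(\l-\cJ_h\l)+\d_h$ with $\d_h=\cJ_{h,\DD}\l-\cJ_h\l$, bounding the first part by the unmodified Cl\'ement estimate, and then proving $\|\d_h\|_{H^{-1}}\le ch\|\l\|$ by a duality argument that tests $\d_h$ against $\phi\in H^1_\DD$, inserts $\cJ_{h,\DD}\phi$, and exploits both the Cl\'ement estimate for $\phi$, the vanishing of $(\d_h,\cJ_{h,\DD}\phi)_h$ at the nodes, and the quadrature estimate of Lemma~\ref{la:quad_control}. This extra $H^{-1}$ duality step is necessary: the $L^2$ shortcut you use does not hold and would only deliver an $O(h^{1/2})$ bound on the Lagrange-multiplier interpolation error.
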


\begin{proof}
(i) We verify the conditions of the inverse
function theorem. Letting $(\tu_h,\tl_h) = (\cI_h u, \cJ_{h,\DD} \l)$
we have the smallness result from Lemma~\ref{la:smallness} with $\k = c_u h$,
the Lipschitz estimate from Lemma~\ref{la:lip_bd} with $c_L = c_u \g_\inv(h)$,
the invertibility result from Lemma~\ref{la:invert} with
$c_\inv = c_u$. Hence, within $B_\veps(\tu_h,\tl_h)$ for every $\veps>0$ with
$c_u h \le \veps \le c_u' \g_\inv(h)^{-1}$ there exists a unique solution
$(u_h,\l_h) \in X_h$ with $F_h(u_h,\l_h) = 0$. \\
(ii) To derive the error estimate we first note that we may choose
$\veps = c_h h$ so that
\[
\|\nabla (u_h-\tu_h)\| + \|\l_h - \tl_h\|_{H^{-1}} \le c h.
\]
We have $\|\nabla (u-\tu_h)\|\le c h$.
To bound the quasi-interpolation error $\|\l - \tl_h\|_{H^{-1}}$ we
define $\d_h = \cJ_{h,\DD}\l - \cJ_h \l$ and
note that
\[
(\d_h ,\phi)
= (\d_h, \phi - \cJ_{h,\DD} \phi)
+ (\d_h, \cJ_{h,\DD} \phi)
- (\d_h , \cJ_{h,\DD} \phi)_h,
\]
where we used that the last term vanishes. With estimates for the
Cl\'ement quasi-interpolant and Lemma~\ref{la:quad_control} we deduce that
\[
|(\d_h ,\phi)| \le c h \|\d_h\| \|\nabla \phi\|
+ ch \|\d_h \| \|\nabla \cJ_{h,\DD} \phi\|.
\]
Inverse estimates and $H^1$ and $L^2$ stability properties
of the Cl\'ement quasi-interpolant $\cJ_{h,\DD}$ thus imply that
\[
\|\d_h\|_{-1} \le c h \|\d_h\| \le c h  \|\l\|.
\]
Noting $\|\l-\cJ_h \l\|\le ch \|\nabla \l\|$ we find that
$\|\l-\cJ_{h,\DD} \l\|_{H^{-1}} \le c h$, which implies the error estimate.
\end{proof}


\section{Other target manifolds}\label{sec:targets}
To discuss the validity of the theory in case of other target manifolds
we consider a hypersurface $\cM\subset \R^m$ given as the zero level set
of a twice continuously differentiable function $g:\R^m \to \R$, i.e.,
\[
\cM = \{s \in \R^m: g(s) =0 \}.
\]
We assume that $Dg$ is nonvanishing on $\cM$; the kernel of $Dg$ defines
the tangent space of $\cM$.
Harmonic maps into $\cM$ are then defined as stationary configurations
$(u,\l)\in \cA$ for the Lagrange functional
\[
L(u,\l) = \frac12 \int_\O |\nabla u|^2 \dv{x} + \int_\O \l \, g(u) \dv{x},
\]
where the last term is interpreted as the application of $\l$ to $g(u)$.
Stationary points $(u,\l)$ satisfy $F(u,\l) = 0$, where
\[
F(u,\l)[(v,\mu)] = (\nabla u,\nabla v) + (\l,Dg(u)\cdot v) + \dual{\mu}{g(u)}.
\]
Crucial for the application of the inverse function theorem are the
invertibility and continuity properties of the second variation
of $I$ given by
\[ \begin{split}
DF(u,\l)[(v,\mu),(w,\eta)]
&= (\nabla v,\nabla w) + (\l, D^2g(u)[v,w])  \\
& \qquad + \dual{\mu}{Dg(u)\cdot w} + \dual{\eta}{Dg(u)\cdot v}.
\end{split}\]
The invertibility
of $DF$ can be analyzed as in the case of the unit sphere using
$v_h = \cI_h ( (\tPi_h \phi) |Dg(u)|^{-2} Dg(u))$ to establish the inf-sup
condition.
A local Lipschitz continuity property requires bounding the difference
\begin{equation}\label{eq:critical_term}
\begin{split}
 \big|\dual{\l}{D^2 g(u)& [v,w]} - \dual{\tl}{D^2 g(\tu)[v,w]}\big| \\
& \le \|\l-\tl\|_{H^{-1}} \| \nabla (D^2 g(u)[v,w])\| \\
& \qquad\ + \|\tl\|_{H^{-1}} \|\nabla((D^2g(u)-D^2g(\tu))[v,w])\|.
\end{split}
\end{equation}
We have, e.g.,
\[\begin{split}
\| \nabla (D^2 g(u)[v,w])\|
& \le \|D^3 g(u)\|_{L^\infty} \|\nabla u\| \|v\|_{L^\infty} \|w\|_{L^\infty} \\
& \qquad + c \| D^2 g(u)\|_{L^\infty} \|\nabla v\| \|\nabla w\|.
\end{split}\]
Bounding the first term on the right-hand side in a discrete setting using
the $H^1_\DD$ norms of $v$ and $w$
requires applying the inverse estimate~\eqref{eq:inv_est_inf} twice,
which leads to $c_L' \le c \g_\inv(h)^2$. If $d=1$ or $d=2$ this still allows
us to apply the inverse function theorem, cf.~\cite{QiTaWi09},
while if $d=3$ it is in general
not guaranteed that $2 c_\inv \k \le 1/(2 c_\inv c_L')$ as both, $\g_\inv(h)^{-2}$ and
$\k$, are of order $O(h)$.
A positive case corresponds to boundaries of ellipsoids for which $g$ can be chosen
as a quadratic function so that $D^2 g$ is constant and the right-hand
side in~\eqref{eq:critical_term} simplifies. Slightly more general,
it suffices to require that $D^3 g$ is sufficiently small and assuming
that we have the additional regularity property $u\in W^{2,\infty}(\O;\R^m)$.

\section{Numerical experiments}\label{sec:num_ex}
In this section we experimentally investigate the validity of the error
estimate and the related aspect of the convergence properties of the Newton
scheme for nonsingular $S^2$-valued harmonic maps in two- and three-dimensional
settings. The first example is obtained from the stereographic projection.

\begin{example}[Inverse stereographic projection]\label{ex:inv_stereo}
Let $d=2$ and $\O = (-1/2,1/2)^2$, $\G_\DD = \p\O$, and $u_\DD = \pi_{\rm st}^{-1}|_{\p\O}$
with the inverse stereographic projection $\pi_{\rm st}^{-1}: \O \to S^2$
given for $x\in \O$ by
\[
\pi_{\rm st}^{-1}(x) = (|x|^2 + 1)^{-1} \begin{bmatrix} 2 x \\ 1 - |x|^2 \end{bmatrix}.
\]
Then $u = \pi_{\rm st}^{-1}$ is a harmonic map with $u|_{\p\O} = u_\DD$.
\end{example}

The second example considers the prototypical harmonic map $x\mapsto x/|x|$,
$x\in \R^3$, away from the origin to avoid a singular solution.

\begin{example}[Radial projection]\label{ex:radial}
Let $d=3$, $\O = (-1/2,1/2)^3$, $\G_\DD = \p\O$, and for $s=0.9 e_3$ and 
and $x\in \p\O$
\[
u_\DD(x) = \frac{x-s}{|x-s|}.
\]
Then $u(x) = (x-s)/|x-s|$ is a harmonic map with $u|_{\p\O} = u_\DD$.
\end{example}

The sufficient condition for global $H^1$ coercivity 
$|\nabla u|< c_P^{-1} \le \pi$ is satisfied in the first and violated
in the second example. Visualizations of
numerical solutions for the examples are displayed in Figure~\ref{fig:snaps};
they illustrate that the cut-locus condition is satisfied in both cases. 
To iteratively compute discrete harmonic maps, we use the Newton scheme
which computes for an initial pair $(u_h^0,\l_h^0)\in \cA_h$ the iterates
$(u_h^k,\l_h^k) \in \cA_h$ via the corrections $(d_h^k,\d_h^k) \in X_h$
that solve
\[
DF_h(u_h^k,\l_h^k)[(v_h,\mu_h),(d_h^k,\d_h^k)] = - F_h(u_h^k,\l_h^k)[v_h,\mu_h]
\]
for all $(v_h,\mu_h) \in X_h$ and the update
\[
(u_h^{k+1},\l_h^{k+1}) = (u_h^k,\l_h^k) + (d_h^k,\d_h^k),
\]
until $\|\nabla d_h^k\| + \|\d_h^k\| \le \veps_{\rm stop}$. We always use
$\veps_{\rm stop} = 10^{-10}$ and denote the final output by $(u_h,\l_h)$.

\begin{figure}[h]
\includegraphics[width=0.48\linewidth]{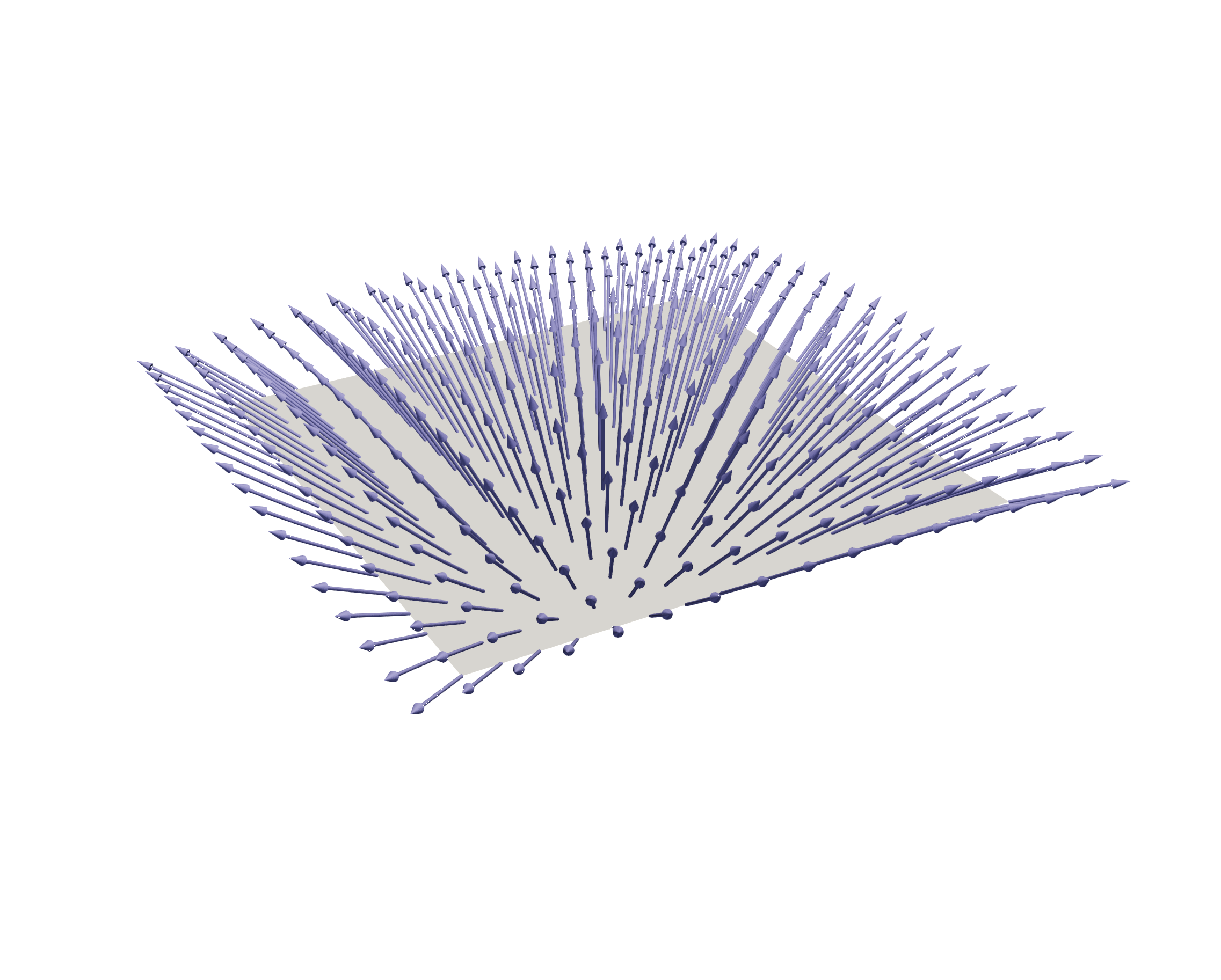} \hspace{2mm}
\includegraphics[width=0.48\linewidth]{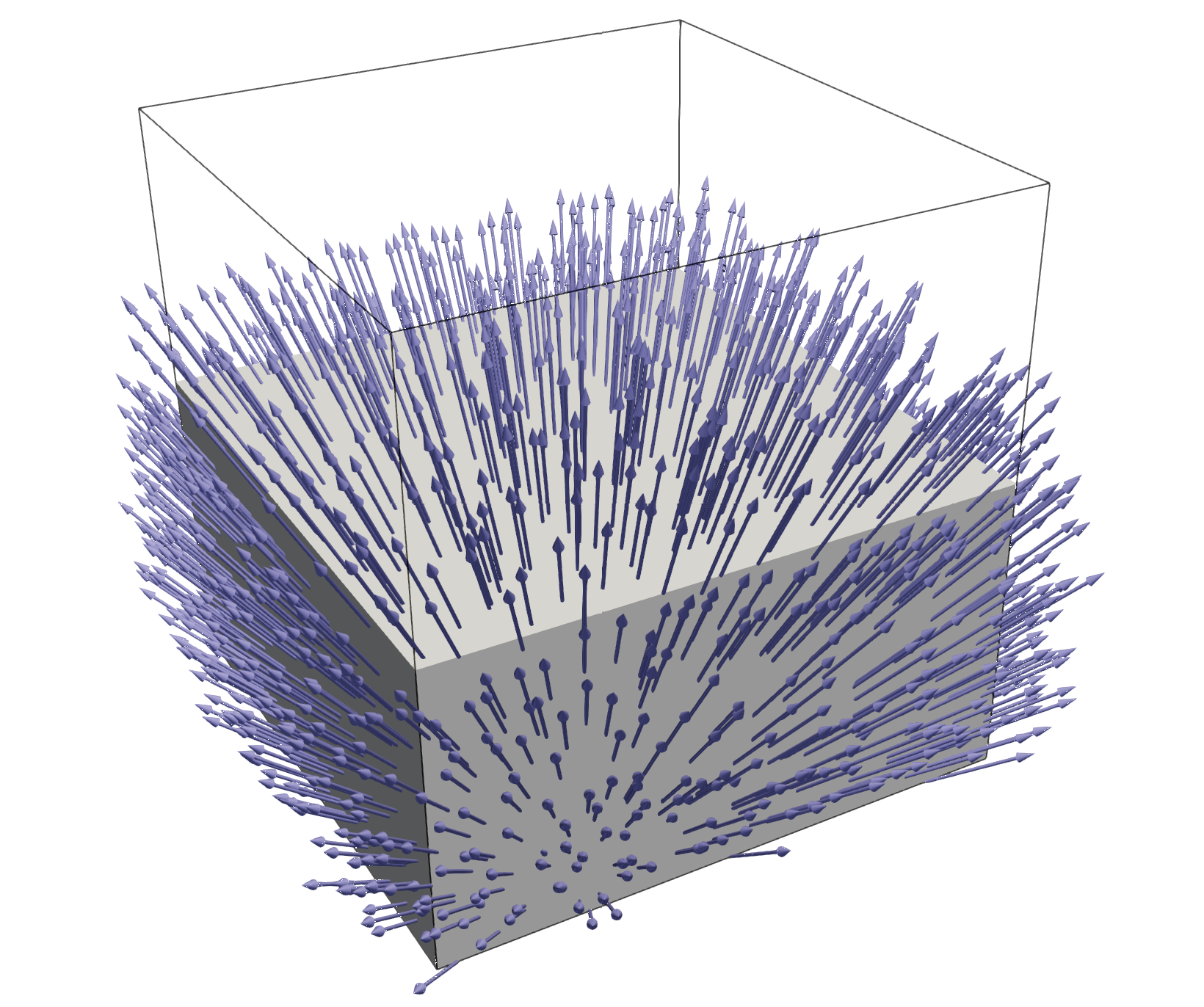}
\caption{\label{fig:snaps} Numerical solutions in
Examples~\ref{ex:inv_stereo} (left) and~\ref{ex:radial} (right).}
\end{figure}

\subsection{Experimental convergence rates}
We use sequences of uniformly refined triangulations of the domains
$\O = (-1/2,1/2)^d$ into triangles or tetrahedra obtained from $\ell$
uniform refinements and with maximal mesh sizes $h_\ell$ comparable
to $2^{-\ell}$.
We refer to these triangulations and quantities related to it via
an index $\ell$ instead of $h_\ell$.
We computed approximate solutions in Examples~\ref{ex:inv_stereo}
and~\ref{ex:radial} and determined the discrete approximation errors
\[
\|e_\ell\|_X = \|\nabla (u_\ell - \cI_\ell u)\| + \|\l_\ell - \cI_{\ell,\DD} \l\|_{H^{-1}_h},
\]
as well as the approximation errors of the Lagrange multiplier in
$L^2$ and $H^{-1}$ norms, i.e.,
\[
\|e^\l_\ell\| = \|\l_\ell - \cI_{\ell,\DD} \l\|, \quad
\|e^\l_\ell\|_{H^{-1}} = \|\l_\ell - \cI_{\ell,\DD} \l\|_{H^{-1}_h}.
\]
Here, $\cI_{\ell,\DD}$ denotes the nodal interpolant with vanishing nodal
values on $\G_\DD$.
We approximated the $H^{-1}$ norm of a finite element function $\mu_h\in \cS^1_\DD(\cT_h)$
by the equivalent quantity $\|\mu_h\|_{H_h^{-1}}= \|\nabla (-\Delta_{h,\DD})^{-1} \mu_h\|$ with the
finite element approximation $(-\Delta_{h,\DD})^{-1}$ of the inverse
of the negative Laplace operator subject to homogeneous Dirichlet boundary
conditions on $\G_\DD$. Experimental convergence rates for an error quantity $\d_\ell$
were determined via the logarithmic slopes given by
\[
\eoc (\d_\ell) = \frac{\log \big(\d_\ell/ \d_{\ell-1}\big)}{\log(h_\ell/h_{\ell-1})}.
\]
For sequences of uniform triangulations in two dimensions obtained from
red refinements of the triangles we have $h_\ell/h_{\ell-1} = 1/2$.
Table~\ref{tab:errs_ex_2da} displays the full approximation errors
for a sequence of uniform triangulations with nodes $\cN_\ell$ and the experimental convergence
rates for different error quantities. We observe a superconvergence phenomenon
in the form of a quadratic rate for the full approximation error. The discrete
Lagrange multipliers converge with respect to the $L^2$ norm with the suboptimal
experimental rate approximately~$0.5$. The same quantities were
computed on a sequence of uniformly refined triangulations with reduced
symmetry properties. These were obtained by randomly perturbing the midpoints of
edges that define the vertices of new triangles. The results shown in
Table~\ref{tab:errs_ex_2db} reveal that this eliminates the superconvergence
phenomenon. Because of the higher complexity of three-dimensional triangulations
and the lack of symmetry properties of the exact solution a larger preasymptotic
range is expected in the three-dimensional setting of Example~\ref{ex:radial}.
The results shown in Table~\ref{tab:errs_ex_3d} indicate a tendency to a linear
convergence behavior on the employed sequence of unperturbed uniform triangulations;
the Lagrange multipliers appear to converge at optimal rates in~$H^{-1}$ as well
as in~$L^2$.

\begin{table}[h]
  \footnotesize
  \begin{tabular}{|c r S[table-format=1.3e-1] S[table-format=1.3e-1] S[table-format=1.3e-1] S[table-format=1.3e-1]|}
  \hline
      {$\ell$} & { $\#\cN_\ell$} & {$\|e_\ell\|_X$}      & {$\eoc(\|e_\ell^\l\|)$} & {$\eoc(\|e_\ell^\l\|_{H_h^{-1}})$} & {$\eoc(\|e_\ell\|_X)$} \\ \hline
  1        & 9              & 0.20000000000000018   & 0.0                   & 0.0                              & 0.0 \\
  2        & 25              & 0.0663180364347164    & 0.4149512824075641    & 1.9164402947595718               & 1.5925268031899233 \\
  3        & 81             & 0.01725969320191879   & 0.464317164571432     & 1.9602765598401453               & 1.9419944714450068 \\
  4        & 289             & 0.004359651236203938  & 0.48401670194531604   & 1.9895934945633842               & 1.9851221883771348 \\
  5        & 1089            & 0.0010927590888991517 & 0.49254715852277636   & 1.997365576988211                & 1.9962373491376362 \\
  6        & 4225           & 0.0002733686974838861 & 0.4964226668140132    & 1.9993392037905315               & 1.9990554181939597 \\
  7        & 16641           & 6.83533748326938e-5   & 0.4982507039084831    & 1.9998347156868022               & 1.999763578979525 \\ \hline
  \end{tabular}
\caption{\label{tab:errs_ex_2da} Approximation errors and experimental convergence rates in Example~\ref{ex:inv_stereo} on a sequence of
uniformly refined triangulations consisting of right-angled triangles. A superconvergence phenomenon is
observed for the full approximation error, suboptimal convergence occurs for the Lagrange multiplier in $L^2$.}
\end{table}

\begin{table}[h]
  \footnotesize
  \begin{tabular}{|c r S[table-format=1.3e-1] S[table-format=1.3e-1] S[table-format=1.3e-1] S[table-format=1.3e-1]|}
  \hline
  {$\ell$} & {$\#\cN_\ell$} & {$\|e_\ell\|_X$}     & {$\eoc(\|e^\l_\ell\|)$} & {$\eoc(\|e^\l_\ell\|_{H_h^{-1}})$} & {$\eoc(\|e_\ell\|_X)$} \\ \hline
  1        & 9             & 0.22424173348758747  & 0.0                   & 0.0                              & 0.0 \\
  2        & 25             & 0.059630300417115575 & 0.4724114878151385    & 3.056820430067094                & 2.2275269132594744 \\
  3        & 81            & 0.02498428172816025  & 0.5244337341959158    & 1.151496104136014                & 1.3919453485176276 \\
  4        & 289            & 0.010683961355861252 & 0.4914399086052459    & 1.3608444424299542               & 1.3185805183868773 \\
  5        & 1089           & 0.005362561677156704 & 0.5333351832218219    & 1.172820181856657                & 1.1757055620431447 \\
  6        & 4225          & 0.002966558981164642 & 0.529762487880129     & 0.9451936977856115               & 0.9128352269680083 \\
  7        & 16641          & 0.001426930327305059 & 0.5178425432086236    & 1.1821764070977479               & 1.1902578943095081 \\ \hline
  \end{tabular}
\caption{\label{tab:errs_ex_2db} Approximation errors and experimental convergence rates in Example~\ref{ex:inv_stereo} on a sequence of
uniformly refined triangulations consisting of perturbed right-angled triangles. No superconvergence phenomenon occurs and the
theoretically predicted rates are confirmed, suboptimal convergence occurs for the Lagrange multiplier in $L^2$.}
\end{table}

\begin{table}[h]
  \footnotesize
  \begin{tabular}{|c r S[table-format=1.3e-1] S[table-format=1.3e-1] S[table-format=1.3e-1] S[table-format=1.3e-1]|}
  \hline
  {$\ell$} & {$\#\cN_\ell$} & {$\|e_\ell\|_X$}    & {$\eoc(\|e^\l_\ell\|)$} & {$\eoc(\|e^\l_\ell\|_{H_h^{-1}})$} & {$\eoc(\|e_\ell\|_X)$} \\ \hline
  1        & 27             & 0.12406689702991758 & 0.0                   & 0.0                              & 0.0 \\
  2        & 125            & 0.14238022363400532 & 0.3615813152281997    & -0.9107731682226866              & -0.2807424383554023 \\
  3        & 729           & 0.14522729032887494 & 0.16898122141718536   & 0.270000724054067                & -0.041926371828678824 \\
  4        & 4913          & 0.10949509501480362 & -0.6774392660729502   & 0.7367785631635176               & 0.8583474371445884 \\
  5        & 35937         & 0.07787835985889435 & -0.7586225397386103   & 0.6732651450276272               & 0.7196571247303444 \\ \hline
  \end{tabular}
\caption{\label{tab:errs_ex_3d} Approximation errors and experimental convergence rates in the three-dimensional setting of
Example~\ref{ex:radial} on a sequence of uniformly refined triangulations.}
\end{table}

\subsection{Iteration convergence}
The conditions of the inverse function theorem imply the superlinear
convergence of Newton type iterations provided that the starting
value is sufficiently close to the solution. In order to experimentally
determine the size of this neighborhood and to quantify the
convergence speed, we use oscillating perturbations of the nodal interpolants
of the exact solutions as starting values, i.e.,
\[
u_h^0 = \cI_h u + \xi_h, \quad \l_h^0 = \cI_{h,\DD} \l + \zeta_h.
\]
The vectorial and scalar perturbations are given by
\[
\xi_h = n_{f,\vrho} (x) [1,\dots,1]^\transp, \quad \zeta_h = n_{f,\vrho}(x),
\]
where for a given frequency $f$ and strength $\vrho \ge 0$ the noise function
$n_{f,\vrho}$ is given by
\[
n_{f,\vrho}(x) =  \vrho \sin(2 \pi f x_1) \dots \sin(2 \pi f x_d).
\]
We experimentally investigated the experimental convergence behavior
of the Newton iteration by representing the residual $F_h(u_h^k,\l_h^k)$
in the nodal basis of the finite element spaces and computing its
Euclidean norm. Table~\ref{tab:newton_2db}
displays the decay of the residuals and indicates a superlinear
but non-quadratic convergence behavior in the two-dimensional settting
of Example~\ref{ex:inv_stereo} with a perturbed triangulation $\cT_7$.  
The perturbation parameters were chosen as $f=10$ and $\vrho= h_\ell$.

\begin{table}[h]
  \footnotesize
  \begin{tabular}{|c S[table-format=1.3e1] S[table-format=-1.3e-1] S[table-format=1.3e-1]| }
  \hline
  {step $k$} & {time (s)}         & {$\mathrm{res}_{k-1}$} & {$\mathrm{res}_{k-2}/\mathrm{res}_{k-1}$} \\ \hline
  0          & 0.429122761        & 1.0                    & 0 \\
  1          & 115.320855653      & 8.128032060123092      & 8.128032060123092 \\
  2          & 1337.492493601     & 0.0014476335447287877  & 0.00017810381824538037 \\
  3          & 1500.231272498     & 1.8973630767054442e-5  & 0.013106653155518807 \\
  4          & 3470.2993826780003 & 5.328912424507504e-7   & 0.028085886617760875 \\
  5          & 5462.661815639     & 9.427732807989806e-10  & 0.0017691663996263014 \\
  6          & 7513.185854847001  & 3.867460537284524e-12  & 0.004102216955074217 \\ \hline
\end{tabular}
\caption{\label{tab:newton_2db} Iterations of the Newton iteration on the
perturbed triangulation $\cT_7$ in Example~\ref{ex:inv_stereo}
with norms of resdiuals $\mathrm{res}_k \simeq F_h(u_h^k,\l_h^k)$.
Their quotients indicate a superlinear, non-quadratic convergence behavior. }
\end{table}

To experimentally determine the convergence area of the Newton iteration as
neighborhoods of the interpolants $x_h = (\cI_h u, \cI_{h,\DD} \l)$ we used
perturbations of $x_h$ of increasing size, i.e.,
\[
f = 10, \quad \vrho = h, \, h^{3/4}, \, h^{1/2}, \, h^{1/4}, \, h^0.
\]
Tables~\ref{tab:newton_area_2d} and ~\ref{tab:newton_area_3d}
display the iteration numbers required to achieve the stopping criterion
on the fixed triangulations $\cT_7$ and $\cT_5$ for Examples~\ref{ex:inv_stereo}
and~\ref{ex:radial}, respectively. A hyphen indicates that the criterion was
not satisfied within~25 iterations.

\begin{table}[h]
  \footnotesize
  \begin{tabular}{|c c c c c c c| }
  \hline
  $\ell$ & $ \vrho = 0$\; & $h$ & $h^{3/4}$ & $h^{1/2}$ & $h^{1/4}$ & $h^0$ \\ \hline
  1                             & 2         & 2   & 3         & 3         & 2         & 3 \\
  2                             & 2         & 4   & 2         & 5         & 5         & 5 \\
  3                             & 2         & 5   & 5         & 8         & 8         & --- \\
  4                             & 2         & 4   & 5         & 6         & ---       & --- \\
  5                             & 2         & 5   & 5         & 6         & ---       & --- \\
  6                             & 2         & 5   & 5         & 6         & ---       & --- \\
  7                             & 3         & 6   & 6         & 6         & ---       & --- \\ \hline
  \end{tabular}
  \caption{\label{tab:newton_area_2d} Iteration numbers for the Newton method in the two-dimensional
    Example~\ref{ex:inv_stereo}
    for different perturbations of strength $\vrho$ of the nodal interpolants as starting value.}
\end{table}

\begin{table}[h]
  \footnotesize
  \begin{tabular}{|c c c c c c c| }
  \hline
  $\ell$ & $\vrho = 0$\; & $h$ & $h^{3/4}$ & $h^{1/2}$ & $h^{1/4}$ & $h^0$ \\ \hline
  1                             & 3         & 3   & 3         & 3         & 3         & 3 \\
  2                             & 3         & 3   & 3         & 3         & 3         & 3 \\
  3                             & 3         & 11  & 11        & ---       & ---       & --- \\
  4                             & 3         & 9   & 12        & ---       & ---       & --- \\
  5                             & 4         & 6   & 8         & ---       & ---       & --- \\ \hline
  \end{tabular}
  \caption{\label{tab:newton_area_3d} Iteration numbers for the Newton method in the three-dimensional
  Example~\ref{ex:radial} for different perturbations of strength $\vrho$ of the nodal interpolants as starting value.}
\end{table}


\medskip

\subsection*{Acknowledgments} The authors thank Tobias Lamm for stimulating
discussions. Financial support by the German Research Foundation (DFG)
via research unit FOR 3013 {\em Vector- and tensor-valued surface PDEs}
(Grant no. BA2268/6–1) is gratefully acknowledged.


\section*{References}
\printbibliography[heading=none]

\end{document}